\newtheorem{theorem}{Theorem}
\newtheorem{lemma}[theorem]{Lemma}
\newtheorem{corollary}[theorem]{Corollary}
\theoremstyle{definition}
\theoremstyle{remark}
\numberwithin{equation}{section}
\newcommand{\GL}{{\mathrm {GL}}}
\newcommand{\Aut}{{\mathrm {Aut}}}
\newcommand{\Irr}{{\mathrm {Irr}}}
\newcommand{\Center}{\mathbf{Z}}
\newcommand{\Centralizer}{\mathbf{C}}
\newcommand{\acs}{\mathrm{acs}}
\newcommand{\bC}{{\mathbf{C}}}
\begin{document}

\title[the commuting probability of finite groups]
{On the commuting probability and supersolvability of finite groups}

\author{Paul Lescot (corresponding author) \\
LMRS,CNRS UMR 6085 \\
UFR des Sciences et Techniques, Universit\'{e} de Rouen \\
Avenue de l'Universit\'{e} BP12 \\
76801 Saint-Etienne du Rouvray, France \\
Phone 00 33 (0)2 32 95 52 24 \\
Fax 00 33 (0)2 32 95 52 86 \\
E-mail paul.lescot@univ-rouen.fr \\
\\
\\
Hung Ngoc Nguyen \\
Department of Mathematics \\
The University of Akron \\ 
Akron, Ohio 44325, USA \\
E-mail hungnguyen@uakron.edu \\
\\
\\
Yong Yang \\
Department of Mathematics \\ 
University of Wisconsin--Parkside \\
Kenosha, WI 53141, USA \\
E-mail yangy@uwp.edu}

\subjclass[2010]{Primary 20E45; Secondary 20D10}

\keywords{finite group, conjugacy class, commuting probability}

\date{\today}

\maketitle

\newpage

\begin{abstract} For a finite group $G$, let $d(G)$ denote the probability that a randomly
chosen pair of elements of $G$ commute. We prove that if $d(G)>1/s$
for some integer $s>1$ and $G$ splits over an abelian normal
nontrivial subgroup $N$, then $G$ has a nontrivial conjugacy class
inside $N$ of size at most $s-1$. We also extend two results of
Barry, MacHale, and N\'{\i} Sh\'{e} on the commuting probability in
connection with supersolvability of finite groups. In particular, we
prove that if $d(G)>5/16$ then either $G$ is supersolvable, or $G$
isoclinic to $A_4$, or $G/\Center(G)$ is isoclinic to $A_4$.
\end{abstract}


\section{Introduction}

For a group $G$, let $d(G)$ denote the probability that a randomly
chosen pair of elements of $G$ commute. That is,
\[d(G):=\frac{1}{|G|^2}|\{(x,y)\in G\times G\mid xy=yx\}|.\]
This quantity is often referred to as the \emph{commuting
probability} of $G$. The study of the commuting probability of finite
groups dates back to work of Gustafson in the seventies.
In~\cite{Gustafson}, he showed that
\[d(G)=\frac{k(G)}{|G|},\]
where $k(G)$ is the number of conjugacy classes of $G$.

It is clear that $d(G)=1$ if and only if $G$ is abelian. Therefore,
when $d(G)$ is close to 1, one might expect that $G$ is close to
abelian. For instance, it was proved by Gustafson in the same paper
that if $d(G)>5/8$ then $G$ must be abelian. In~\cite{Lescot3}, the
first author classified all groups with commuting probability at
least $1/2$ -- if $d(G)\geq1/2$ then $G$ is isoclinic to the trivial
group, an extraspecial $2$-group, or $S_3$. As a consequence, if
$d(G)>1/2$ then $G$ must be nilpotent. Going further, the first
author proved in~\cite{Lescot1, Lescot2} that $G$ is solvable
whenever $d(G)>1/12$. This was improved by Guralnick and Robinson
in~\cite[Theorem~11]{Guralnick-Robinson} where they showed that if
$d(G)>3/40$ then either $G$ is solvable or $G\cong A_5\times A$ for
some abelian group $A$. In~\cite{Barry-MacHale}, Barry, MacHale, and
N\'{\i} Sh\'{e} proved that $G$ must be supersolvable whenever
$d(G)>1/3$ and pointed out that, since $d(A_4)=1/3$, the bound
cannot be improved.

Two finite groups are said to be \emph{isoclinic} if there exists
isomorphisms between their inner automorphism groups and commutator
subgroups such that these isomorphisms are compatible with the
commutator map, see \S\ref{section3} for the detailed definition.
This concept is weaker than isomorphism and was introduced by
Hall~\cite{Hall} in connection with the enumeration of $p$-groups.
It was shown by the first author~\cite{Lescot3} that the commuting
probability is invariant under isoclinism. It follows that any
group isoclinic to $A_4$ has commuting probability exactly equal to
$1/3$. Our first result highlights the special role of $A_4$ among
non-supersolvable groups with commuting probability greater than
$5/16$. Here and what follows, the center of $G$, as usual, is
denoted by $\Center(G)$.

\begin{theorem}\label{main theorem 3} Let $G$ be a finite group. If $d(G)>5/16$, then
\begin{enumerate}
\item[(i)] $G$ is supersolvable, or
\item[(ii)] $G$ is isoclinic to $A_4$, or
\item[(iii)] $G/\Center(G)$ is isoclinic to $A_4$.
\end{enumerate}
\end{theorem}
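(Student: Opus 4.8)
The plan is to argue by induction on $|G|$, reducing to the case where $G$ itself is non-supersolvable but every proper quotient is supersolvable. First I would dispense with the easy cases: if $G$ is supersolvable we are in (i), so assume not. Since $d(G) > 5/16 > 1/12$, the result of Guralnick and Robinson (quoted in the introduction) gives that $G$ is solvable — indeed $d(G) > 3/40$ already forces solvability unless $G \cong A_5 \times A$, and $d(A_5 \times A) = 1/60$, so $G$ is solvable. Now if $\Center(G) \neq 1$, then $d(G/\Center(G)) \geq d(G) > 5/16$ (commuting probability does not decrease under central quotients, a standard fact I would cite or quickly reprove via $k(G/\Center(G))|\Center(G)| \geq k(G)$ style estimates), and $G/\Center(G)$ is smaller; by induction $G/\Center(G)$ is supersolvable or isoclinic to $A_4$ or has a further central quotient isoclinic to $A_4$. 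The middle case puts us in conclusion (iii). The last case: if $(G/\Center(G))/\Center(G/\Center(G))$ is isoclinic to $A_4$, I would need to show this already implies $G/\Center(G)$ is isoclinic to $A_4$ — here I expect to use that isoclinism is invariant under central extensions in the relevant sense, i.e. $H$ and $H/\Center(H)$ have the same ``reduced'' structure, so iterating the central quotient does not produce genuinely new cases. And if $G/\Center(G)$ turns out supersolvable, then since supersolvability is detected modulo the center (a solvable group $G$ with $G/\Center(G)$ supersolvable need not be supersolvable in general, but with the commuting probability constraint I would push through that it is, or land back in an isoclinic-to-$A_4$ situation).

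So the crux is the centerless case: $\Center(G) = 1$, $G$ solvable, non-supersolvable, $d(G) > 5/16$, and every proper quotient supersolvable. The goal is to show $G \cong A_4$ (which, being centerless, is the unique such group up to isomorphism in its isoclinism class, modulo the $A_4 \times \text{abelian}$ possibilities — but centerless kills the abelian factor). I would let $N$ be a minimal normal subgroup; it is elementary abelian, say of order $p^a$, and $G$ splits over $N$ is not automatic, so I would instead invoke the \emph{abstract} structure: $G/N$ is supersolvable. If $N$ is the unique minimal normal subgroup then $\Center(G) = 1$ forces $N$ to be self-centralizing ($N = \Centralizer_G(N)$), so $G/N \hookrightarrow \GL_a(p)$ acts faithfully and irreducibly on $N$. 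Here I would bring in Theorem~\ref{main theorem 3}'s companion result stated in the abstract — if $G$ splits over $N$ (which I'd arrange via Gaschütz/Schur–Zassenhaus when $\gcd(|N|,|G/N|)$ is controlled, or handle the non-split case separately) then $d(G) > 5/16$ yields a nontrivial $G$-class inside $N$ of size at most $\lceil 16/5 \rceil - 1 = 3$. A nontrivial class of size $\leq 3$ inside an elementary abelian normal subgroup, combined with faithful irreducible action, is extremely restrictive: the orbit of a nontrivial $v \in N$ under $G/N$ has size dividing $|G/N|$ and at most $3$, while irreducibility means the orbit spans $N$ over $\FF_p$. A spanning orbit of size $\leq 3$ forces $a \leq 3$ and $p$ small; chasing these few cases (and using that $d(G) = k(G)/|G| > 5/16$ bounds $|G|$ against $k(G)$, with $k(G) \leq k(N) + $ (number of $G/N$-orbits on $N$) type counting) I expect to be left with $N$ of order $4$ (orbit of size $3$) and $G/N$ a subgroup of $\GL_2(2) \cong S_3$ acting irreducibly, the only faithful irreducible possibility being $G/N \cong \ZZ/3$, i.e. $G \cong A_4$; the $S_3$ quotient gives $G \cong S_4$-section type with $d < 5/16$, excluded.

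The main obstacle I anticipate is the case of \emph{several} minimal normal subgroups, or more precisely when $\Centralizer_G(N) \neq N$, so that the clean ``$G/N \hookrightarrow \GL_a(p)$'' picture fails. Then $F(G)$, the Fitting subgroup, is larger and one must work with $C = \Centralizer_G(F(G)) \leq F(G)$ and the coprime/modular representation theory of $G/F(G)$ on the chief factors of $F(G)$. I would handle this by a Clifford-theoretic count: non-supersolvability means some chief factor $V$ of $G$ has $\dim V \geq 2$ (a chief factor on which $G$ does not act by scalars extended to dimension one), and I would show that the presence of such a factor, together with $d(G) > 5/16$, forces essentially all other chief factors to be central and small, grinding down — as in the $A_4$ analysis — to $G$ having a quotient or central section isoclinic to $A_4$. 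The bookkeeping here, tracking how $k(G)$ accumulates contributions from each chief factor and using $d(\bar G) \geq d(G)$ along the chief series, is where the real work lies; the structure of the argument, though, is the induction plus the ``small class in an abelian normal subgroup'' input, exactly mirroring how Barry–MacHale–Ní Shé must have handled the $d(G) > 1/3$ bound, only now with the sharper $5/16$ threshold forcing us to carry the three exceptional families rather than none.
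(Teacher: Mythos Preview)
Your approach diverges substantially from the paper's, and the decisive idea you are missing is a direct bound on $|G'|$. The paper does not induct via central quotients, nor does it invoke Theorem~\ref{main theorem 1}. Instead it first replaces $G$ by an isoclinic stem group (so $\Center(G)\subseteq G'$), and then proves the elementary but sharp bound $|G'|<12$: since every nonlinear irreducible character has degree at least $2$, one has $|G|\geq [G:G']+4\bigl(k(G)-[G:G']\bigr)$, which rearranges to $1/|G'|+4\bigl(d(G)-1/|G'|\bigr)\leq 1$ and hence $|G'|<12$. Combined with the observation that $G'$ must be noncyclic (else $G$ is supersolvable) and cannot be $S_3$, $D_8$, or $D_{10}$ (each has a cyclic characteristic non-central subgroup, so cannot occur as a commutator subgroup), this leaves only $G'\in\{C_2^2,\,C_3^2,\,Q_8,\,C_4\times C_2,\,C_2^3\}$, and each case is dispatched by a short direct analysis of minimal normal subgroups inside $G'$.

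Your inductive reduction through $G/\Center(G)$ has a genuine gap exactly where you flag it: if the inductive hypothesis lands $G/\Center(G)$ in case~(iii), you only learn that $(G/\Center(G))/\Center(G/\Center(G))$ is isoclinic to $A_4$, and you need $G/\Center(G)$ itself to be. Your proposed fix, that ``$H$ and $H/\Center(H)$ have the same reduced structure,'' is false for isoclinism: $Q_8$ and $Q_8/\Center(Q_8)\cong C_2\times C_2$ are not isoclinic. The paper's stem-group reduction sidesteps this cascade entirely, since once $\Center(G)\subseteq G'$ with $|G'|\leq 11$, the center has order at most $2$ in the relevant cases and conclusion~(iii) arises in a single step. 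Your centerless analysis via Theorem~\ref{main theorem 1} also needs $G$ to split over $N$, which you do not establish (the paper gets the splitting it needs from $G'\nsubseteq\Phi(G)$ and minimality of $G'$, not from Gasch\"utz or Schur--Zassenhaus); and in any event the paper's proof of Theorem~\ref{main theorem 3} makes no use of Theorem~\ref{main theorem 1} --- the two results are independent.
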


\noindent Theorem~\ref{main theorem 3} has two consequences. The
obvious one is the aforementioned result of Barry, MacHale, and
N\'{\i} Sh\'{e}. We would like to note that their proof is somewhat
more complicated and requires a large amount of computations with
GAP~\cite{GAP4}. The second one is less obvious and shows that the
groups isoclinic to $A_4$ are the only non-supersolvable groups of
commuting probability at least $1/3$.

\begin{corollary}\label{corollary} Let $G$ be a finite group with $d(G)\geq 1/3$, then
$G$ is either supersolvable or isoclinic to $A_4$.
\end{corollary}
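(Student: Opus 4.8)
The plan is to bootstrap from Theorem~\ref{main theorem 3}. Since $1/3 > 5/16$, the hypothesis $d(G)\ge 1/3$ places $G$ in one of the three cases of that theorem; cases (i) and (ii) already give the conclusion, so the whole task is to show that in case (iii) -- when $\bar G:=G/\Center(G)$ is isoclinic to $A_4$ -- the group $G$ is itself isoclinic to $A_4$. Throughout write $Z=\Center(G)$.

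The first step is a fibrewise count of conjugacy classes over $\bar G=G/Z$. For a class $\bar C$ of $\bar G$ with representative $\bar x$, the map $g\mapsto [g,x]$ carries the full preimage $H_x$ of $C_{\bar G}(\bar x)$ onto a subgroup of $Z$ of order $\mu(\bar C):=[H_x:C_G(x)]$, and one checks that $\bar C$ has exactly $|Z|/\mu(\bar C)$ classes of $G$ above it; hence $k(G)=|Z|\sum_{\bar C}\mu(\bar C)^{-1}\le |Z|\,k(\bar G)$. Because $\bar G$ is isoclinic to $A_4$ and $d$ is an isoclinism invariant \cite{Lescot3}, $k(\bar G)/|\bar G|=d(A_4)=1/3$, so $k(G)\le |G|/3$, i.e. $d(G)\le 1/3$. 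The hypothesis $d(G)\ge 1/3$ then forces equality everywhere, so $\mu(\bar C)=1$ for every class $\bar C$; equivalently, for every $x\in G$ the centralizer $C_G(x)$ maps \emph{onto} $C_{\bar G}(\bar x)$. This ``maximal centralizer'' rigidity is the engine of the proof.

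From it I would read off the structure of $G$ in three steps. (a) If $\Center(\bar G)\ne 1$, apply the rigidity to the singleton class of a nontrivial $\bar t\in\Center(\bar G)$: its centralizer in $\bar G$ is all of $\bar G$, forcing $C_G(t)=G$, i.e. $t\in Z$ and $\bar t=1$, a contradiction; hence $\Center(\bar G)=1$, so $\bar G\cong A_4$ and $\Center_2(G)=Z$. (b) Apply the rigidity to an $x$ mapping to a double transposition of $A_4$, whose centralizer in $\bar G$ is $\overline{G'}=(G/Z)'$; every such $x$ then centralizes $G'$, and since the preimage $G'Z$ of $\overline{G'}$ is covered by $Z$ and the cosets mapping to double transpositions, $G'Z\le C_G(G')$. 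Thus $G'$ is abelian, and since $G/G'Z\cong C_3$ is simple, $C_G(G')=G'Z$ and $C_3$ acts faithfully on $G'$. (c) I claim $G'\cap Z=1$. If not, the five–term homology exact sequence of $1\to Z\to G\to A_4\to 1$ identifies $G'\cap Z$ with a quotient of the Schur multiplier of $A_4$, which has order $2$; so $|G'\cap Z|=2$ and $G'$ is abelian of order $8$. But $C_4\times C_2$ has no automorphism of order $3$, while if $G'\cong C_2^3$ then, writing $G'\cong\FF_2\oplus\FF_4$ as an $\FF_2[C_3]$-module with $U\cong\FF_4$ the summand on which $C_3$ acts nontrivially, and choosing $x$ over a $3$-cycle, one has $G=\langle x,G'\rangle Z$ and $\langle x,G'\rangle/U$ abelian, so $G'=\langle x,G'\rangle'$ has order at most $|U|=4$, contradicting $|G'|=8$. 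Hence $G'\cap Z=1$ and $G'\cong(G/Z)'=V_4$.

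Finally, $\Center_2(G)=Z$ and $G'\cap Z=1$ say exactly that the quotient map $G\to G/Z\cong A_4$ induces an isomorphism $G/\Center(G)\cong (G/Z)/\Center(G/Z)$ and an isomorphism $G'\cong (G/Z)'$ compatible with the commutator maps, so $G$ is isoclinic to $A_4$, completing the proof of the corollary. I expect the main obstacle to be step (c): excluding $G'\cap Z\ne 1$ uses both the abelianness of $G'$ forced by the centralizer rigidity and facts peculiar to $A_4$ -- that its Schur multiplier has order $2$, and that among abelian groups of order $8$ only $C_2^3$ admits an order-$3$ automorphism. The remaining steps, in particular the fibrewise class count of step one and steps (a)--(b), are routine.
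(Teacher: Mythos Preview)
Your argument is correct and reaches the conclusion by a genuinely different path from the paper's. Both proofs invoke Theorem~\ref{main theorem 3} and use the equality case of Gallagher's inequality (your ``centralizer rigidity'', the paper's Lemma~\ref{Gallagher lemma consequence}(ii)--(iii)) to force $\Center(G/\Center(G))=1$ and hence $G/\Center(G)\cong A_4$. After that the two arguments diverge. The paper first passes, via Lemma~\ref{isoclinic lemma}, to a stem group with $\Center(G)\subsetneq G'$; then the bound $|G'|\le 9$ gives $|\Center(G)|=2$, so $|G|=24$, and one finishes by inspecting the central extensions of $A_4$ by $C_2$. You instead keep $G$ arbitrary and show directly that $G'\cap\Center(G)=1$: the five-term sequence bounds $|G'\cap Z|$ by $|M(A_4)|=2$, your step~(b) forces $G'$ abelian with a faithful $C_3$-action, and an $\FF_2[C_3]$-module analysis of the three abelian groups of order $8$ excludes $|G'\cap Z|=2$. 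This trades the stem-group reduction and the order-$24$ classification for a homological input and a short representation-theoretic argument; it also explains structurally why the Schur cover $\SL_2(3)$ is the only obstruction, which you eliminate without ever computing $d(\SL_2(3))=7/24$. The paper's route is shorter once Lemma~\ref{isoclinic lemma} is in hand; yours is more self-contained.
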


We remark that, the average size of a conjugacy class of $G$,
denoted by $\acs(G)$, is exactly the reciprocal of $d(G)$.
Therefore, Theorem~\ref{main theorem 3} is equivalent to: that if
$\acs(G)< 16/5$ then either $G$ is supersolvable or $G$ is isoclinic
to $A_4$ or $G/\Center(G)$ is isoclinic to $A_4$. Recently, Isaacs,
Loukaki, and Moret\'{o}~\cite{Isaacs-Loukaki-Moreto} have obtained
some dual results on solvability and nilpotency in connection with
average character degree of finite groups. For instance, they showed
that a finite group is supersolvable whenever its average character
degree is less than $3/2$. It would be interesting if there is a
dual result of Theorem~\ref{main theorem 3} for the average
character degree.

For groups of odd order, it is possible to obtain better bounds. It
was proved in~\cite{Barry-MacHale} that if $G$ is a group of odd
order with $d(G)>11/75$, then $G$ must be supersolvable. Let $C_n$
denote the cyclic group of order $n$. We notice that $(C_5\times
C_5)\rtimes C_3$ is the smallest non-supersolvable group of odd
order. Here we can show that the groups isoclinic to $(C_5\times
C_5)\rtimes C_3$ have commuting probability `substantially' larger
than that of other non-supersolvable groups of odd order.

\begin{theorem}\label{main theorem 2} Let $G$ be a finite group of odd order. If $d(G)>35/243<11/75$, then $G$
is either supersolvable or isoclinic to $(C_5\times C_5)\rtimes
C_3$.
\end{theorem}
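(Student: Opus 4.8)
The plan is to take a counterexample $G$ of least order: so $G$ is a non-supersolvable group of odd order with $d(G)>35/243$ which is not isoclinic to $H:=(C_5\times C_5)\rtimes C_3$, while every group of smaller order satisfying the hypotheses is supersolvable or isoclinic to $H$. Since $|G|$ is odd, $G$ is solvable, and for every $1\neq N\trianglelefteq G$ one has $d(G/N)\ge d(G)>35/243$ with $|G/N|<|G|$, so $G/N$ is supersolvable or isoclinic to $H$; moreover, as $G$ is supersolvable exactly when $G/\Phi(G)$ is, the quotient $G/\Phi(G)$ is non-supersolvable and hence, if $\Phi(G)\neq 1$, isoclinic to $H$. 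Since $d(\cdot)$ and supersolvability are isoclinism invariants (the latter because $G$ is supersolvable iff $G/\Center(G)$ is), one may in addition take $G$ to be a stem group.

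The first, and technically heaviest, step is a reduction to the case that $G=V\rtimes K$ is a \emph{primitive} solvable group: $V$ is the unique minimal normal subgroup of $G$, it is elementary abelian of order $p^{d}$ with $d\ge 2$ (such a non-prime chief factor exists because $G$ is solvable but not supersolvable), $C_G(V)=V$, and $K=G/V$ acts faithfully and irreducibly on $V$; in particular $\Center(G)=C_V(K)=1$. To reach this one complements a minimal normal subgroup lying outside $\Phi(G)$ by Gasch\"utz's theorem, and one uses the minimality of $G$ to handle the configurations in which the primitive quotient $\overline G$ of $G$ attached to the failure of supersolvability is proper --- there $\overline G$ is non-supersolvable, hence isoclinic to $H$, and one must show this contradicts $d(G)>35/243$. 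That point subdivides further: a quotient $G/M\cong H$ with $M$ non-abelian gives $d(G)\le d(M)\,d(H)\le (5/8)(11/75)<35/243$; a nontrivial action of $H$ on an abelian $M$ is handled through the $H$-module structure of $M$ together with the splitting lemma and induction; and a stem extension of $H$ is controlled because the Schur multiplier of $H$ is just $C_5$, its covering group $5^{1+2}\rtimes C_3$ having $d=23/375<35/243$.

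With $G=V\rtimes K$ primitive the argument closes fast. As $35/243>1/7$ we have $d(G)>1/7$, so the splitting lemma quoted in the abstract, applied with $s=7$, yields a nontrivial conjugacy class of $G$ inside $V$ of size at most $6$; since $|G|$ is odd this size is $1$, $3$ or $5$, and $1$ is excluded because $\Center(G)\cap V=1$. Hence some $1\neq x\in V$ has $|x^G|=|K:\mathrm{Stab}_K(x)|=q$ with $q\in\{3,5\}$, i.e.\ $K$ has an orbit of size $q$ on $V$. The map $K\to\mathrm{Sym}(q)$ has image transitive of odd order, hence cyclic of order $q$; its kernel fixes a point of the orbit, so it centralizes a nonzero $K$-invariant subgroup of $V$, hence all of $V$ by irreducibility, hence is trivial by faithfulness. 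Thus $K\cong C_q$, $V$ is a faithful irreducible $\mathbb{F}_pC_q$-module with $p\neq q$ and $\dim V=\mathrm{ord}_q(p)\ge 2$, and $G$ is the Frobenius group $\mathbb{F}_p^{d}\rtimes C_q$ of order $p^dq$, for which $k(G)=q+(p^d-1)/q$ and so $d(G)=(q^2+p^d-1)/(q^2p^d)$. The inequality $d(G)>35/243$ forces $p^d<27$ when $q=3$ and $p^d<10$ when $q=5$; running through the cases ($q=3$: $d=\mathrm{ord}_3(p)=2$, so $p\equiv 2\pmod 3$; $q=5$: $d=\mathrm{ord}_5(p)\in\{2,4\}$) leaves only $(p,d,q)=(5,2,3)$, so $G\cong H$ --- contradicting the choice of $G$.

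The main obstacle is the reduction in the second paragraph. Because quotients of non-supersolvable groups can be supersolvable, the induction does not by itself pin down where supersolvability fails, so one has to push it into a single complemented non-prime chief factor, confronting head-on the cases $\Phi(G)\neq 1$, several minimal normal subgroups, and $C_G(V)\neq V$, and then ruling out every ``inflation'' of $H$ --- extensions of $H$ by an abelian normal subgroup carrying a nontrivial $H$-action, and stem extensions of $H$ --- that is non-supersolvable yet not isoclinic to $H$. It is exactly the gap between $35/243$ and the sharp value $d(H)=11/75$ --- the next largest value of $d(\cdot)$ on a minimal non-supersolvable group of odd order being $43/363$, attained by $(C_{11}\times C_{11})\rtimes C_3$ --- that leaves enough room for these estimates to go through.
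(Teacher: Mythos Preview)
Your endgame (the third paragraph) is correct and rather elegant: once $G=V\rtimes K$ is primitive of odd order with $d(G)>35/243>1/7$, Theorem~\ref{main theorem 1} with $s=7$ forces a $K$-orbit on $V\setminus\{0\}$ of size $q\in\{3,5\}$, and the transitive odd-order image in $S_q$ together with irreducibility and faithfulness collapses $K$ to $C_q$; the Frobenius count then isolates $(C_5\times C_5)\rtimes C_3$. However, the paper never invokes Theorem~\ref{main theorem 1} here at all, and it reaches the split situation $G=G'\rtimes M$ by a much shorter route that bypasses your entire second paragraph. The key observation you are missing is Lemma~\ref{|G'|<27}: since $|G|$ is odd, every nonlinear irreducible character has degree at least $3$, and the inequality $[G:G']+9(k(G)-[G:G'])\le |G|$ rearranges to $|G'|<27$ as soon as $d(G)>35/243$. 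With $|G'|\le 25$ odd and noncyclic, a minimal normal $N\le G'$ gives $G/N$ supersolvable or isoclinic to $H$; the latter forces $|G'|\ge 25\cdot|N|\ge 50$, a contradiction, so $G/N$ is supersolvable, $N$ is noncyclic, and a glance at odd-order groups of size $\le 25$ leaves $N=G'\cong C_3^2$ or $C_5^2$. From there the paper finishes by listing the abelian odd-order subgroups of $\GL_2(3)$ and $\GL_2(5)$ --- an argument no harder than your Frobenius computation.

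The gap in your proposal is precisely the reduction you flag as the ``main obstacle'': it is not carried out, and several of the ingredients you name are either unproved or unclear. The phrase ``a nontrivial action of $H$ on an abelian $M$ is handled through the $H$-module structure of $M$ together with the splitting lemma and induction'' is not an argument; you would need, for each odd prime $p$, to bound $d(G)$ for every extension of $H$ by a nontrivial $\FF_pH$-module sitting inside $\Phi(G)$, and the induction does not obviously bite since such $M$ need not themselves carry a non-supersolvable quotient. The Schur-multiplier step likewise requires justification (both that $M(H)\cong C_5$ and that the unique stem cover has $d=23/375$), and even granting it you still have to pass from ``$\Phi(G)=1$'' to ``$G$ is primitive with $C_G(V)=V$'', which is not automatic when $F(G)$ has several minimal normal summands. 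None of this is insurmountable, but it is a substantial detour around a two-line character-degree estimate; I would recommend inserting Lemma~\ref{|G'|<27} at the outset, after which your reduction becomes immediate and you may, if you wish, keep the Theorem~\ref{main theorem 1} finish as an alternative to the subgroup enumeration in $\GL_2(p)$.
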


Our last result provides a characteristic of certain groups with
`large' commuting probability and therefore can be applied to obtain
the inside structure of these groups. For an example,
see~\S\ref{section4}.

\begin{theorem}\label{main theorem 1} Let $s\geq2$ be an integer and $G$ a finite group with $d(G)>1/s$. Let $N$ be an abelian normal nontrivial subgroup of
$G$ and suppose that $G$ splits over $N$. Then there exists a
nontrivial conjugacy class of $G$ inside $N$ of size at most $s-1$.
In particular, we have either $\Center(G)\neq 1$ or $G$ has a proper
subgroup of index at most $s-1$.
\end{theorem}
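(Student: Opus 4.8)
The plan is to exploit the splitting $G = N \rtimes H$ for some complement $H$, and count conjugacy classes by tracking how $H$ acts on $N$. Write $|G| = k(G)\cdot d(G)$ — or rather, use Gustafson's identity $d(G) = k(G)/|G|$ together with the hypothesis $d(G) > 1/s$ to get the crucial inequality $k(G)\, s > |G|$. Since $N$ is abelian and normal, $H$ acts on $N$ by conjugation, and every $G$-conjugacy class contained in $N$ is a union of $H$-orbits on $N$; in fact, because $N$ is abelian, for $n \in N$ the $G$-class of $n$ inside $N$ is exactly the $H$-orbit $\{h n h^{-1} : h \in H\}$ (conjugation by elements of $N$ is trivial on $N$). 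So the number of $G$-classes meeting $N$ equals the number of $H$-orbits on $N$, which I will denote $k_H(N)$.

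First I would establish an upper bound on $k(G)$ in terms of orbit data. A standard argument (used in the literature on this kind of problem, going back to the Lescot–style analysis) bounds the number of conjugacy classes of $G = N \rtimes H$ by $k(G) \le k_H(N)\cdot k(H)$, where $k_H(N)$ is the number of $H$-orbits on $N$ and $k(H)$ is the number of conjugacy classes of $H$: indeed each class of $G$ projects to a class of $G/N \cong H$, and the fiber over a given $H$-class is controlled by orbits of an appropriate stabilizer acting on $N$, giving at most $k_H(N)$ classes per $H$-class. Combining this with $k(G)\, s > |G| = |N|\cdot|H|$ and the trivial bound $k(H) \le |H|$ yields
\[
k_H(N)\cdot |H| \ge k(G) > \frac{|N|\cdot |H|}{s},
\]
hence $k_H(N) > |N|/s$, i.e. $s \cdot k_H(N) > |N|$.

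Now I would run a counting argument on the $H$-orbits on $N$. There is always the trivial orbit $\{1\}$. Suppose, for contradiction, that every nontrivial $G$-conjugacy class inside $N$ — equivalently, every nontrivial $H$-orbit on $N$ — has size at least $s$. Then partitioning $N$ into its $H$-orbits gives $|N| = 1 + \sum (\text{sizes of nontrivial orbits}) \ge 1 + s\,(k_H(N) - 1)$, so $s\, k_H(N) \le |N| + s - 1 < |N| + s$, which is not yet a contradiction — I need to be a touch more careful. The fix is to sharpen the bound $k(H) \le |H|$: either $H = 1$, in which case $G = N$ is abelian and any non-identity element of $N$ gives a class of size $1 \le s-1$ (using $s \ge 2$), and we are done; or $H \neq 1$ and $k(H) \le |H| - 1$ would require $H$ nontrivial with... actually the cleanest route is instead to note that if the trivial orbit is the only orbit of size $< s$, then $|N| \ge 1 + s(k_H(N)-1)$ gives $k_H(N) \le (|N|-1)/s + 1 = (|N|-1+s)/s$; meanwhile $k_H(N) > |N|/s$ forces $|N| < |N| - 1 + s$, i.e. $s > 1$, a tautology. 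So I must extract one more unit from the orbit-counting side, and the right place to do that is to observe that the bound $k(G) \le k_H(N) k(H)$ can be improved to a strict inequality or by one, OR — more robustly — to argue that the presence of $\Center(G) = 1$ and $H \neq 1$ forces at least one more small orbit. I anticipate this is the main obstacle: getting the constant exactly right so that "size $\le s-1$" rather than "size $\le s$" comes out. The resolution I would pursue: if every nontrivial $H$-orbit on $N$ has size $\ge s$, then in particular $\Center(G) \cap N = N^H$ could still be large, but combining $N^H \subseteq \Center(G)$ with the structure, and using that $k(H)\le |H|$ with equality iff $H$ abelian (and then $H$ itself contributes further constraints), should close the gap.

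Finally, the "in particular" clause: having produced a nontrivial $G$-class $C \subseteq N$ with $|C| \le s - 1$, pick $x \in C$. If $|C| = 1$ then $x \in \Center(G)$ and $x \neq 1$, so $\Center(G) \neq 1$. If $|C| \ge 2$ then $|C| = [G : \bC_G(x)] \le s-1$, so $\bC_G(x)$ is a proper subgroup (proper since $x \notin \Center(G)$ as $|C| > 1$) of index at most $s-1$. This dichotomy is exactly the stated conclusion, so the proof ends there.
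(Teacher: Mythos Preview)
Your approach has a genuine gap that you yourself identify but do not close. Granting the bound $k(G)\le k_H(N)\,k(H)$, your chain of inequalities yields only $s\,k_H(N)>|N|$ on one side and $|N|\ge 1+s(k_H(N)-1)$ on the other, and combining these gives nothing beyond $s>1$. The fixes you float---tightening $k(H)\le|H|$ to a strict inequality, or invoking $\Center(G)=1$---do not recover the missing unit in general: when $H$ is abelian one has $k(H)=|H|$ exactly, and the hypothesis places no constraint on $\Center(G)$. So as written the argument does not prove the stated bound of $s-1$ (it would at best give $s$), and you have not supplied a working repair.

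The paper's proof proceeds quite differently: rather than bounding $k(G)$ from above via $k_H(N)$, it bounds the \emph{average class size} from below by $s$. Writing $G=HN$, $C=\Centralizer_H(N)$, it first shows every element $ha$ with $a\ne1$ has at least $s$ distinct conjugates (using $s$ elements of the $H$-orbit of $a$). For elements of $H\setminus C$, it finds $s+1$ conjugates, and more generally if $t$ elements of $H\setminus C$ lie in one $G$-class then that class has size at least $s+t$. The crucial extra input---precisely what your count is missing---is that the assumption ``every nontrivial orbit has size $\ge s$'' forces $|H:C|\ge s$, since $H/C$ embeds in the permutation group of each orbit. Summing over classes meeting $H$ then shows their average size is at least $s$ as well, contradicting $d(G)>1/s$. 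Your handling of the ``in particular'' clause at the end is fine.
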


Theorems~\ref{main theorem 3}, \ref{main theorem 2} and~\ref{main
theorem 1} are respectively proved in Sections~\ref{section2},
\ref{section3}, and~\ref{section4}. Corollary~\ref{corollary} is
proved at the end of Section~\ref{section2}.


\section{Groups with commuting probability greater than
5/16}\label{section2}

We will prove Theorem~\ref{main theorem 3} and
Corollary~\ref{corollary} in this section. We first recall a
well-known result of Gallagher~\cite{Gallagher}.

\begin{lemma}\label{Gallagher lemma} If $N$ is a normal subgroup of $G$,
then
\[k(G)\leq k(G/N)k(N)\] and the equality is equivalent to
\[C_{G/N}(gN)=C_G(g)N/N \text{ for each } g\in G.\]
\end{lemma}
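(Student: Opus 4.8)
The plan is to reduce the entire statement to the single inequality
\[ d(G)\le\frac{m-1}{|N|-1}, \]
where $m$ denotes the number of conjugacy classes of $G$ contained in $N$, and then to finish by a short averaging argument. Fix a complement $H$ with $G=N\rtimes H$. Since $N$ is abelian and normal, conjugation by $N$ fixes $N$ pointwise, so the $G$-classes inside $N$ are exactly the $H$-orbits on $N$; the $m-1$ nontrivial ones partition the $|N|-1$ non-identity elements of $N$. Granting the displayed inequality, if $d(G)>1/s$ then $(m-1)/(|N|-1)>1/s$, hence $s(m-1)>|N|-1$, so the average size $(|N|-1)/(m-1)$ of a nontrivial class inside $N$ is strictly less than $s$; the smallest such class then has size at most $s-1$, as required. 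For the final assertion, choose $x\neq1$ in a class $C$ inside $N$ with $|C|\le s-1$: if $|C|=1$ then $x\in\Center(G)$, and otherwise $C_G(x)$ is a proper subgroup of index $|C|\le s-1$.

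It remains to prove the key inequality, and for this I would use Clifford theory with respect to $N$. As $N$ is abelian, $\Irr(N)$ consists of linear characters and $N$ lies in the inertia group of every $\lambda\in\Irr(N)$, so $I_G(\lambda)=N\rtimes H_\lambda$ with $H_\lambda=\Stab_H(\lambda)$. Because the extension splits and $N$ is abelian, each $\lambda$ extends to a linear character of $I_G(\lambda)$: setting $\hat\lambda(nh)=\lambda(n)$ for $n\in N$ and $h\in H_\lambda$ gives a homomorphism precisely because $\lambda$ is $H_\lambda$-invariant. Hence, by the Clifford correspondence together with Gallagher's extension theorem, the irreducible characters of $G$ lying over the $H$-orbit $[\lambda]$ are in bijection with $\Irr(H_\lambda)$, so
\[ k(G)=\sum_{[\lambda]}k(H_\lambda)\le\sum_{[\lambda]}|H_\lambda|=|H|\sum_{[\lambda]}\frac{1}{|[\lambda]|}, \]
the sums running over the $H$-orbits on $\Irr(N)$.

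The main point is now to relate this estimate to $m$. The number of $H$-orbits on $\Irr(N)$ equals the number of $H$-orbits on $N$, namely $m$, by Brauer's permutation lemma: for each $h\in H$ one has $|C_N(h)|=|N/[h,N]|=|\{\lambda:\lambda^h=\lambda\}|$, so the permutation characters of $H$ on $N$ and on $\Irr(N)$ coincide and have the same number of orbits. Writing $a_1,\dots,a_m$ for the orbit sizes on $\Irr(N)$, with $a_1=1$ from the trivial character and $\sum_i a_i=|N|$, the elementary bound $1/a_i\le 1-(a_i-1)/(|N|-1)$ (valid since $1\le a_i\le|N|-1$ for $i\ge2$), summed over $i\ge2$ and combined with the $i=1$ term, yields $\sum_i 1/a_i\le (m-1)|N|/(|N|-1)$, and therefore $k(G)\le (m-1)|G|/(|N|-1)$, which is the key inequality. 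The one genuine subtlety — and the step I expect to be the crux — is exactly this passage between orbits on $N$ and orbits on $\Irr(N)$: the two actions need not have the same multiset of orbit sizes, only the same number of orbits, so the character count must be organized on the dual side while the desired conclusion lives on $N$, and Brauer's permutation lemma is what bridges the gap. One must also take care not to waste any slack, since the estimate is sharp (equality holds for $A_4$, $S_3$, and the Frobenius groups $C_p\rtimes C_{p-1}$).
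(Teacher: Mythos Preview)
Your proposal does not address the stated lemma at all. The lemma is Gallagher's inequality $k(G)\le k(G/N)k(N)$ for an \emph{arbitrary} normal subgroup $N$ of a finite group $G$, together with the characterization of equality via $C_{G/N}(gN)=C_G(g)N/N$. Your write-up, by contrast, assumes from the outset that $N$ is abelian and that $G$ splits as $N\rtimes H$, introduces an integer $s$ with $d(G)>1/s$, and concludes that some nontrivial $G$-class inside $N$ has size at most $s-1$. That is Theorem~\ref{main theorem 1} of the paper, not Lemma~\ref{Gallagher lemma}. Nowhere do you bound $k(G)$ by $k(G/N)k(N)$, nowhere do you discuss the equality criterion, and the hypotheses you use (abelian $N$, a complement $H$, the parameter $s$) are simply not part of the lemma's statement.

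For the record, the paper does not prove Lemma~\ref{Gallagher lemma} either: it is quoted as a known result of Gallagher~\cite{Gallagher}. As for the argument you actually wrote, it is a perfectly reasonable (and rather slick) alternative proof of Theorem~\ref{main theorem 1}, different from the paper's direct class-counting approach: you pass to the dual side via Clifford theory, use Brauer's permutation lemma to match orbit counts on $N$ and $\Irr(N)$, and then apply a convexity-type bound on $\sum 1/a_i$. That is correct and arguably cleaner than the paper's method---but it belongs under Theorem~\ref{main theorem 1}, not here.
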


This gives an immediate consequence.

\begin{lemma}\label{Gallagher lemma consequence} Let $N$ be a normal subgroup of $G$. Then

\begin{enumerate}
\item[(i)] $d(G)\leq d(G/N)d(N)$,

\item[(ii)] $d(G)=d(G/N)$ if and only if $N$ is abelian and $C_{G/N}(gN)=C_G(g)N/N$ for each $g\in
G$, and

\item[(iii)] if $N\subseteq \Center(G)$ and $d(G)=d(G/N)$, then
$\Center(G/N)=\Center(G)/N$.
\end{enumerate}
\end{lemma}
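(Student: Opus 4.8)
The final statement is Lemma~\ref{Gallagher lemma consequence}, which I want to derive from Gallagher's inequality $k(G)\leq k(G/N)k(N)$ (Lemma~\ref{Gallagher lemma}) together with its equality condition. First I would record the translation dictionary: since $d(G)=k(G)/|G|$ and $|G|=|G/N|\cdot|N|$, part~(i) is a purely formal consequence of Gallagher's inequality. Dividing $k(G)\leq k(G/N)k(N)$ by $|G|=|G/N|\,|N|$ gives
\[
d(G)=\frac{k(G)}{|G|}\leq\frac{k(G/N)k(N)}{|G/N|\,|N|}=d(G/N)\,d(N),
\]
so no work beyond invoking Lemma~\ref{Gallagher lemma} is needed.

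\textbf{Part~(ii).} Here I would exploit that equality $d(G)=d(G/N)$ in~(i) is \emph{not} literally the equality case of Gallagher, because of the extra factor $d(N)$. The plan is to chain two facts. From~(i), $d(G)\leq d(G/N)\,d(N)\leq d(G/N)$, since $d(N)\leq 1$ always. Thus $d(G)=d(G/N)$ forces both inequalities to be equalities: $d(N)=1$ (equivalently $N$ is abelian, by the elementary fact that $d(N)=1$ iff $N$ is abelian, noted in the introduction) \emph{and} equality in Gallagher's inequality. By Lemma~\ref{Gallagher lemma}, the latter is exactly the condition $C_{G/N}(gN)=C_G(g)N/N$ for each $g\in G$. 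Conversely, if $N$ is abelian and the centralizer condition holds, then $k(N)=|N|$ (as $N$ is abelian) and Gallagher gives $k(G)=k(G/N)k(N)=k(G/N)|N|$, whence $d(G)=d(G/N)$. I would present this as a short two-line argument; the only point requiring care is matching the equality condition's direction cleanly and noting $d(N)=1\iff N$ abelian.

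\textbf{Part~(iii).} Now assume $N\subseteq\Center(G)$ and $d(G)=d(G/N)$. I want $\Center(G/N)=\Center(G)/N$. The inclusion $\Center(G)/N\subseteq\Center(G/N)$ is automatic for any central $N$. For the reverse, I would use the centralizer condition from~(ii): for each $g\in G$, $C_{G/N}(gN)=C_G(g)N/N$. Suppose $gN\in\Center(G/N)$; then $C_{G/N}(gN)=G/N$, so $C_G(g)N/N=G/N$, i.e.\ $C_G(g)N=G$. Since $N\subseteq\Center(G)\subseteq C_G(g)$, this gives $C_G(g)=G$, i.e.\ $g\in\Center(G)$, hence $gN\in\Center(G)/N$. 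This yields the reverse inclusion and the claim.

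\textbf{The main obstacle} is marshalling the equality case of Gallagher in part~(ii) correctly: the hypothesis is $d(G)=d(G/N)$ rather than $k(G)=k(G/N)k(N)$, so I must first squeeze out $d(N)=1$ to reduce to the genuine Gallagher equality before applying its characterization. Once that reduction is made explicit, parts~(i) and~(iii) are essentially formal, and the whole proof should be short.
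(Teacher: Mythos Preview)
Your proposal is correct and follows essentially the same approach as the paper: parts~(i) and~(ii) are deduced from Gallagher's inequality and its equality condition (the paper simply asserts this, while you spell out the squeeze $d(G)\leq d(G/N)d(N)\leq d(G/N)$ to extract $d(N)=1$), and your proof of~(iii) via the centralizer condition and $N\subseteq\Center(G)\subseteq C_G(g)$ matches the paper's argument almost verbatim.
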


\begin{proof} (i) and (ii) are consequences of Lemma~\ref{Gallagher
lemma}. We now prove (iii). Assume that $N\subseteq \Center(G)$ and
$d(G)=d(G/N)$. We have $C_{G/N}(gN)=C_G(g)N/N$ for every $g\in G$
and therefore
\begin{align*}gN\in \Center(G/N)&\Leftrightarrow \Centralizer_{G/N}(gN)=G/N\\
&\Leftrightarrow \Centralizer_G(g)N=G\\
&\Leftrightarrow \Centralizer_G(g)=G\\
&\Leftrightarrow g\in \Center(G)
\end{align*}
Therefore, $\Center(G/N)=\Center(G)/N$, as desired.
\end{proof}

Two groups $G$ and $H$ are said to be \emph{isoclinic} if there are
isomorphisms $\varphi:G/\Center(G)\rightarrow H/\Center(H)$ and
$\phi: G'\rightarrow H'$ such that \begin{align*} \text{ if }
\varphi(g_1\Center(G))&=h_1\Center(H)\\ \text{ and }
\varphi(g_2\Center(G))&=h_2\Center(H),\\ \text{ then }
\phi([g_1,g_2])&=[h_1,h_2].\end{align*} This concept is weaker than
isomorphism and was introduced by Hall in~\cite{Hall} as a
structurally motivated classification for finite groups,
particularly for $p$-groups. It is well-known that several
characteristics of finite groups are invariant under isoclinism and
in particular supersolvability is one of those,
see~\cite{Bioch-Waall}. Furthermore, it is proved in~\cite{Lescot3}
that the commuting probability is also invariant under isoclinism.

A stem group is defined as a group whose center is contained inside
its derived subgroup. It is known that every group is isoclinic to a
stem group and if we restrict to finite groups, a stem group has the
minimum order among all groups isoclinic to it, see~\cite{Hall} for
more details. The following lemma plays an important role in the
proof of Theorems~\ref{main theorem 3} and~\ref{main theorem 2}.

\begin{lemma}\label{isoclinic lemma} For every finite group $G$,
there is a finite group $H$ isoclinic to $G$ such that $|H|\leq |G|$
and $\Center(H)\subseteq H'$.
\end{lemma}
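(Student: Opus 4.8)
The plan is to deduce the statement from Hall's classical facts about stem groups, which are recalled in the excerpt immediately before the lemma: every finite group is isoclinic to a stem group, and within an isoclinism class the stem groups are exactly the groups of least order. Granting these, the argument is short. Let $H$ be a stem group isoclinic to $G$. By the definition of a stem group, $\Center(H)\subseteq H'$. Since $H$ has minimal order in its isoclinism class and $G$ lies in that class (isoclinism being an equivalence relation), we get $|H|\leq|G|$; and $H$ is finite because $G$ is. That is the whole proof.

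It is worth recording where the difficulty in those background facts sits, since one might hope to prove the lemma directly by induction on $|G|$. If $\Center(G)\subseteq G'$ we may take $H=G$; otherwise one wants to shrink $G$. The mechanism for shrinking is this: whenever $N\leq\Center(G)$ with $N\cap G'=1$, the quotient $G/N$ is isoclinic to $G$. Indeed $(G/N)'=G'N/N\cong G'$ because $N\cap G'=1$; and if $gN$ centralizes $G/N$ then $[g,x]\in N\cap G'=1$ for every $x\in G$, so $\Center(G/N)=\Center(G)/N$ and hence $(G/N)/\Center(G/N)\cong G/\Center(G)$, with the commutator maps corresponding under the obvious isomorphisms. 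Thus a nontrivial such $N$ would lower the order and finish the induction.

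The main obstacle — and the reason a bare induction fails — is that when $\Center(G)\not\subseteq G'$ there need not be any nontrivial $N\leq\Center(G)$ with $N\cap G'=1$: this would require the finite abelian group $\Center(G)$ to split over its subgroup $\Center(G)\cap G'$, which can fail. The modular group of order $16$ is a concrete witness: its center is cyclic of order $4$, its derived subgroup is the unique subgroup of order $2$, and there is no complement, yet it is isoclinic to $D_8$. So one cannot in general reach a stem group by quotienting $G$ itself; Hall's argument first passes to a possibly larger group $G^{*}$ isoclinic to $G$ whose center does split over $\Center(G^{*})\cap(G^{*})'$ — a suitable central extension of $G/\Center(G)$ carrying the same commutator pairing — and only then quotients $G^{*}$ down to a stem group, which by Hall's count still has order at most $|G|$. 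Reproducing this would take us far afield, so I would simply cite \cite{Hall} and extract the two consequences used in the first paragraph.
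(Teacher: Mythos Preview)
Your proposal is correct and matches the paper's approach exactly: the paper gives no formal proof of this lemma at all, but simply recalls in the preceding paragraph Hall's facts that every group is isoclinic to a stem group and that stem groups have minimal order in their isoclinism class, then states the lemma as an immediate consequence (citing \cite{Hall}). Your first paragraph spells out precisely this deduction, and your additional remarks on why a naive induction fails (with the modular group of order $16$ as witness) are correct and informative, though they go beyond what the paper records.
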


The next lemma will narrow down the possibilities for the commutator
subgroup of a finite group with commuting probability greater than
$5/16$.

\begin{lemma}\label{|G'|<9} Let $G$ be an finite group with $d(G)>5/16$. Then
$|G'|< 12$.
\end{lemma}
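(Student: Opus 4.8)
The plan is to bound $|G'|$ by exploiting the interplay between the commuting probability, the size of the center, and the structure of $G/\Center(G)$. First I would reduce to the case where $G$ is a stem group: by Lemma~\ref{isoclinic lemma} there is a group $H$ isoclinic to $G$ with $\Center(H)\subseteq H'$, and since isoclinism preserves both $d(G)$ and the isomorphism type of $G'$, it suffices to prove the bound for $H$. So assume $\Center(G)\subseteq G'$ from now on.

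Next I would use the standard estimate relating $d(G)$ to the size of the center and the index $|G:\Center(G)|$. Writing $Z=\Center(G)$, every conjugacy class of $G$ has size dividing $|G:Z|$, and there are exactly $|Z|$ central classes, so a crude count gives $k(G)\le |Z| + (|G|-|Z|)/p$ where $p$ is the smallest prime dividing $|G:Z|$; more usefully, one has the well-known inequality $d(G)\le \frac{1}{|G:Z|}\big(|G:Z|\text{-dependent terms}\big)$. The cleanest tool here is: $d(G)\le \frac{1}{4}\big(1+3/|G:Z|\big)$ when $G$ is nonabelian, or refined versions thereof. Combining $d(G)>5/16$ with such a bound forces $|G:\Center(G)|$ to be small — concretely it should pin $|G:Z|$ down to a short list of small values (something like $|G:Z|\le 15$ or so). Since $G/Z$ cannot be cyclic (else $G$ abelian), and $Z\subseteq G'$ forces $G/Z$ to have no nontrivial abelian direct factor beyond what the stem condition allows, the possibilities for $G/Z$ are severely limited.

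Then I would bound $|G'|$ directly. One knows $G'/(G'\cap Z)$ embeds in $G/Z$, so $|G'| = |G'\cap Z|\cdot|G'/(G'\cap Z)| \le |Z|\cdot |G/Z|$; but since we are in a stem group, $Z\subseteq G'$, so $|G'\cap Z| = |Z|$ and $|G'| = |Z|\cdot|G':G'\cap Z|$ with $|G':G'\cap Z|$ dividing $|G:Z|$. Now I would play the two factors against each other: if $|G'|$ were large, then since $Z\subseteq G'$ and $|G:Z|$ is small, $|Z|$ must be large relative to $|G|$, which pushes $d(G)$ up — but we also need enough non-central classes. Running through the finitely many small values of $|G:Z|$ permitted by the previous step, and for each using Gallagher's inequality (Lemma~\ref{Gallagher lemma consequence}(i)) applied to $N=Z$, namely $d(G)\le d(G/Z)$ together with $d(G/Z) \le \frac{1}{4}(1+3/|G/Z|)$ applied to the centerless quotient, I would eliminate all configurations with $|G'|\ge 12$, leaving $|G'|\le 11$, i.e. $|G'|<12$.

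The main obstacle I expect is the combinatorial case analysis on the small centerless group $G/\Center(G)$: for each admissible order one must identify the possible isomorphism types (using that $\Center(G/\Center(G))=1$ by Lemma~\ref{Gallagher lemma consequence}(iii) combined with the stem reduction, or directly), compute or bound their commuting probabilities, and then reconstruct the constraint on $|G'|$ via the extension $1\to Z\to G\to G/Z\to 1$ with $Z\subseteq G'$. Groups like $S_3$, $D_{10}$, $A_4$, and small Frobenius groups will be the borderline cases, and some care is needed because $G'$ can genuinely be as large as $9$ (as in $A_4$, where $|G'|=4$, or $(C_5\times C_5)\rtimes C_3$ where $|G'|=25$ — but that group has $d<5/16$, which is exactly why the threshold $5/16$ rather than $1/3$ matters here). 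Keeping track of which extensions are actually realizable, rather than merely numerically consistent, is where the argument becomes delicate.
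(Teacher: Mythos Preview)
Your plan has a genuine gap at its pivot point. The inequality you write down, $d(G)\le \tfrac{1}{4}\bigl(1+3/|G:\Center(G)|\bigr)$, is false in general: already for $G=S_3$ one has $\Center(G)=1$, $|G:\Center(G)|=6$, and $d(G)=1/2>3/8$. More seriously, the whole strategy of first bounding $|G:\Center(G)|$ cannot work, even after your stem reduction. Extraspecial $2$-groups of order $2^{2n+1}$ are stem groups with $d(G)=\tfrac{1}{2}+2^{-(2n+1)}>5/16$ for every $n$, yet $|G:\Center(G)|=2^{2n}$ is unbounded. So no inequality of the shape you propose can force $|G:\Center(G)|$ onto a finite list, and the subsequent case analysis never gets off the ground. (In those same examples $|G'|=2$, which is exactly why the lemma is true but your route to it is blocked.)

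The paper's proof bypasses $\Center(G)$ entirely and goes straight to $|G'|$ via character degrees: since $G$ has exactly $[G:G']$ linear characters and every nonlinear irreducible character has degree at least $2$,
\[
|G|=\sum_{\chi\in\Irr(G)}\chi(1)^2\ge [G:G']+4\bigl(k(G)-[G:G']\bigr),
\]
which after dividing by $|G|$ rearranges to $d(G)\le \tfrac{1}{4}\bigl(1+3/|G'|\bigr)$. Plugging in $d(G)>5/16$ gives $|G'|<12$ in one line. Note that this is precisely the inequality you were reaching for, but with $|G'|$ in place of $|G:\Center(G)|$; once you have the correct target, no stem reduction, no Gallagher, and no case analysis is needed.
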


\begin{proof} Let $\Irr_2(G)$ denote the set of nonlinear irreducible complex
characters of $G$. Then, as $G$ has exactly $[G:G']$ linear
characters, we have $|\Irr_2(G)|=k(G)-[G:G']$ where $k(G)$ is the
number of conjugacy classes of $G$. We obtain
$$|G|=[G:G']+\sum_{\chi\in\Irr_2(G)} \chi(1)^2\geq
[G:G']+4(k(G)-[G:G']).$$ As $d(G)=k(G)/|G|$, it follows that
$$\frac{1}{|G'|}+4(d(G)-\frac{1}{|G'|})\leq 1.$$ Using the
hypothesis $d(G)>5/16$, we deduce that $|G'|<12$.
\end{proof}

We are now ready to prove Theorem~\ref{main theorem 3}.

\begin{proof}[Proof of Theorem~\ref{main theorem 3}] Assume that $G$ is a finite group
with $d(G)>5/16$ and $G$ is not supersolvable. We aim to show that
either $G$ is not isoclinic to $A_4$ or $G/\Center(G)$ is isoclinic
to $A_4$. Since commuting probability and supersolvability are both
invariant under isoclinism, using Lemma~\ref{isoclinic lemma}, we
can assume that $\Center(G)\subseteq G'$. Indeed, if
$\Center(G)=G'$, then $G$ is nilpotent which violates our
assumption. So we assume furthermore that $\Center(G)\varsubsetneq
G'$. Recall that $d(G)>5/16$ and hence $|G'|\leq11$ by
Lemma~\ref{|G'|<9}. We note that $G'$ is noncyclic as $G$ is not
supersolvable.

First we remark that $S_3,D_8$ as well as $D_{10}$ have a cyclic,
characteristic, non-central subgroup and hence it is well-known that
they cannot arise as commutator subgroups, see~\cite{MacHale-Murchu}
for instance. Thus we are left with the following possibilities of
$G'$.

\medskip

\textbf{Case $G'\cong C_2\times C_2$}: If $\Center(G)\cong C_2$ then
the normal series $1<\Center(G)<G'<G$ implies that $G$ is
supersolvable. So we assume that $\Center(G)=1$. Thus $G'$ is a
minimal normal subgroup of $G$. Now, since $G$ is not supersolvable,
$G'\nsubseteq \Phi(G)$, see~\cite{Huppert} for instance. Therefore,
$G'$ is not contained in a maximal subgroup $G$, say $M$. We have
$G=G'M$. Also, as $G'$ is abelian, we see that $G'\cap M \lhd G$.
Now the minimality of $G'$ and the fact that $G'$ is not contained
in $M$ imply that $G'\cap M=1$. This means $G$ splits over $G'$ or
equivalently $G\cong G'\rtimes M$. Thus, as $M\cong G/G'$ is abelian
and $\Center(G)=1$, we deduce that $\Centralizer_M(G')=1.$ It
follows that
$$M\leq \Aut(G')\cong\Aut(C_2\times C_2)\cong S_3,$$ and hence
$$M\cong C_2 \text{ or } M\cong C_3.$$ In the former case, $|G|=8$
and $G$ would be nilpotent, a contradiction. In the latter case,
$G\cong A_4$ and we are done.

\medskip

\textbf{Case $G'\cong C_3\times C_3$}: As in the previous case, we
can assume $\Center(G)=1$, $G'$ is a minimal normal subgroup of $G$,
and $G\cong G'\rtimes M$. Here $M$ is an abelian subgroup of
$\Aut(G')\cong \GL_2(3)$. Consulting the list of subgroups of
$\GL_2(3)$ reveals: $M\cong C_2,C_3,C_4, C_2\times C_2, C_6$, or
$C_8$. However, it is routine to check that all these possibilities
result in either $G$ is supersolvable or $d(G)\leq5/16$.

\medskip

In the remaining cases, we let $N$ be a minimal normal subgroup of
$G$ with $N\subseteq G'$. Recall that $\Center(G)\varsubsetneq G'$
and so in the case $\Center(G)\neq 1$, we can even take $N\subseteq
\Center(G)$.

\medskip

\textbf{Case $G'\cong Q_8$, $C_4\times C_2$, or $C_2\times C_2\times
C_2$} and $N\cong C_2$: If $N<\Center(G)$ then the normal series
$1<N<\Center(G)<G'<G$ would imply that that $G$ is supersolvable, a
contradiction. Thus $\Center(G)=N\cong C_2$. Also, as $G$ is not
supersolvable, $G/\Center(G)$ is not supersolvable as well. It
follows that $(G/\Center(G))'$ is not cyclic so that
\[(\frac{G}{\Center(G)})'\cong C_2\times C_2.\]
By Lemma~\ref{Gallagher lemma consequence}, we know that $d(G/N)\geq
d(G)>5/16$. Now we are in the first case with $G/\Center(G)$
replacing $G$. Therefore, we conclude that either $G/\Center(G)$ is
supersolvable, a contradiction, or $G/\Center(G)$ is isoclinic to
$A_4$, as desired.

\medskip

\textbf{Case $G'\cong C_4\times C_2$, or $C_2\times C_2\times C_2$}
and $N\cong C_2\times C_2$: Then $G'/N\cong C_2$ is a normal
subgroup of $G/N$. In particular, $G'/N \subseteq \Center(G/N)$ and
as $G/G'$ is abelian, we deduce that $G/N$ is supersolvable. As in
the previous case, by using the non-supersolvability of $G$, we
deduce that $G\cong N\rtimes M$ where $M$ is a maximal subgroup of
$G$. It then follows that $C_2\cong G'\cap M\vartriangleleft M$ and
hence $G'\cap M$ centralizes $M$, whence $G'\cap M$ centralizes $G$.
This implies that $\Center(G)\neq 1$, which in turn implies that
$N=\Center(G)$ since $N\subseteq \Center(G)\subsetneq G'$. This
violates the minimality of $N$.

\medskip

\textbf{Case $N=G'\cong C_2\times C_2\times C_2$}: Since
$\Center(G)\varsubsetneq G'$, we obtain $\Center(G)=1$. As before,
we can show that $G\cong G'\rtimes M$ where $M$ is an abelian
subgroup of $\Aut(G')\cong \GL_3(2)$. This implies that $M\cong C_2,
C_3, C_4, C_2\times C_2$, or $C_7$. The cases $M\cong C_2, C_4$ or
$C_2\times C_2$ would imply that $G$ is a $2$-group. The case
$M\cong C_7$ would imply that $d(G)=1/7$. Finally, the case $M\cong
C_3$ implies that $G\cong C_2\times A_4$ and hence $G'\cong
C_2\times C_2$, which is the final contradiction.
\end{proof}

Now we prove Corollary~\ref{corollary}.

\begin{proof}[Proof of Corollary~\ref{corollary}] Assume, to the contrary, that the statement is false and let $G$ be a minimal counterexample. Again we know
that $\Center(G)\varsubsetneq G'$ and as in the proof of
Lemma~\ref{|G'|<9}, we also have $|G'|\leq9$. Using
Theorem~\ref{main theorem 3}, we deduce that $G/\Center(G)$ is
isoclinic to $A_4$ and $\Center(G)$ is nontrivial. In particular,
$G'/\Center(G)\cong A_4'\cong C_2\times C_2$ and hence
$\Center(G)\cong C_2$. Using Lemma~\ref{Gallagher lemma
consequence}(i), we have
\[\frac{1}{3}=d(\frac{G}{\Center(G)})\geq d(G)\geq \frac{1}{3}.\]
Therefore $d(G)=d(G/\Center(G))$, which implies that
$\Center(G/\Center(G))=\Center(G)/\Center(G)=1$ by
Lemma~\ref{Gallagher lemma consequence}(iii). It follows that
$G/\Center(G)\cong A_4$ and as $|\Center(G)|=2$, we have
$G=C_2\times A_4$. This violates the assumption that
$\Center(G)\varsubsetneq G'$.
\end{proof}


\section{Groups of odd order}\label{section3}

We will prove Theorem~\ref{main theorem 2} in this section. As in
Section~\ref{section2}, we narrow down the possibility for the order
of the commutator subgroup of a group in consideration.

\begin{lemma}\label{|G'|<27} Let $G$ be an odd order finite group with $d(G)>35/243$. Then
$|G'|<27$.
\end{lemma}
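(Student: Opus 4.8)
The plan is to mimic the proof of Lemma~\ref{|G'|<9} almost verbatim, replacing the bound $1/4$ on the square of the smallest nonlinear character degree by a better estimate coming from the odd order hypothesis. Recall that in a group of odd order every nonlinear irreducible complex character $\chi$ satisfies $\chi\neq\bar\chi$, so nonreal characters come in complex-conjugate pairs; moreover, by a classical result (Burnside), a group of odd order has no nontrivial real-valued irreducible characters, hence \emph{every} nonlinear $\chi\in\Irr(G)$ is nonreal and $\chi(1)\geq3$ (the degree of a nonlinear character of a group of odd order is odd and $>1$, so at least $3$). This already gives $\chi(1)^2\geq9$ for each $\chi\in\Irr_2(G)$.

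First I would set, as before, $|\Irr_2(G)|=k(G)-[G:G']$ and write
\[
|G|=[G:G']+\sum_{\chi\in\Irr_2(G)}\chi(1)^2\geq [G:G']+9\bigl(k(G)-[G:G']\bigr).
\]
Dividing by $|G|$ and using $d(G)=k(G)/|G|$ gives
\[
\frac{1}{|G'|}+9\Bigl(d(G)-\frac{1}{|G'|}\Bigr)\leq 1,
\]
i.e. $9\,d(G)-8/|G'|\leq 1$. Substituting the hypothesis $d(G)>35/243$ yields $8/|G'|>9\cdot(35/243)-1=315/243-243/243=72/243=8/27$, hence $|G'|<27$, which is the claim.

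The only genuine point requiring care — and the step I expect to be the main (minor) obstacle — is justifying that every nonlinear irreducible character of an odd order group has degree at least $3$, equivalently that there is no irreducible character of degree $2$; this follows because character degrees of a group of odd order are odd (they divide $|G|$, which is odd), so a nonlinear degree is at least $3$. One could alternatively invoke the even stronger fact that the nonreal nonlinear characters pair up, but for the bound $|G'|<27$ the elementary parity observation $\chi(1)\geq3$ already suffices, and no case analysis or \textsf{GAP} computation is needed. I would present the argument in three or four lines exactly paralleling Lemma~\ref{|G'|<9}.
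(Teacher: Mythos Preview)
Your argument is correct and is essentially identical to the paper's own proof: both use that in a group of odd order every nonlinear irreducible character has degree at least $3$ (since degrees divide $|G|$), obtain $|G|\geq [G:G']+9(k(G)-[G:G'])$, and rearrange to get $|G'|<27$ from $d(G)>35/243$. The extra remarks you make about Burnside's theorem and complex-conjugate pairs are accurate but unnecessary, as you yourself note.
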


\begin{proof} We repeat some of the arguments in the proof of Lemma~\ref{|G'|<9}.
Recall that $\Irr_2(G)$ denotes the set of nonlinear irreducible
complex characters of $G$ and we have $|\Irr_2(G)|=k(G)-[G:G']$.
Since $|G|$ is odd, every character in $\Irr_2(G)$ has degree at
least 3. We obtain
$$|G|=[G:G']+\sum_{\chi\in\Irr_2(G)} \chi(1)^2\geq
[G:G']+9(k(G)-[G:G']),$$ and therefore
$$\frac{1}{|G'|}+9(d(G)-\frac{1}{|G'|})\leq 1.$$ Since $d(G)>35/243$,
it follows that $|G'|<27$, as wanted.
\end{proof}

\begin{proof}[Proof of Theorem~\ref{main theorem 2}] We argue by
contradiction and let $G$ be a minimal counterexample. Since
commuting probability and supersolvability are both invariant under
isoclinism, using Lemma~\ref{isoclinic lemma}, we can assume that
$\Center(G)\subseteq G'$. Now $G$ is a non-supersolvable group of
odd order with $d(G)>35/243$. Applying Lemma~\ref{|G'|<27}, we have
$|G'|<27$ so that $G'$ is a noncyclic odd order group of order at
most $25$.

We choose a minimal normal subgroup $N$ of $G$ with $N\subseteq G'$.
and note that $N$ is elementary abelian. By Lemma~\ref{Gallagher
lemma consequence}, we have $d(G/N)\geq d(G)>35/243$ so that $G/N$
is supersolvable or isoclinic to $(C_5\times C_5)\rtimes C_3$ by the
minimality of $G$.

First we show that the case where $G/N$ is isoclinic to $(C_5\times
C_5)\rtimes C_3$ cannot happen. Assume so. Then $$G'/N=(G/N)'\cong
((C_5\times C_5)\rtimes C_3)'=C_5\times C_5,$$ which implies that
$|G'|$ is at least $50$, a contradiction. We conclude that $G/N$ is
supersolvable. In particular, if $N$ is cyclic then $G$ is
supersolvable and we have a contradiction. Note that, when $G'\ncong
C_3\times C_3$ and $C_5\times C_5$, a routine check on groups of odd
order at most $25$ shows that $N$ must be cyclic. Thus, it remains
to consider the cases where $G'\cong C_3\times C_3$ or $G'\cong
C_5\times C_5$ and $G'$ is a minimal normal subgroup of $G$.

If $G'\subseteq \Phi(G)$ then $G$ is nilpotent and we are done.
Therefore we can assume that $G'\nsubseteq \Phi(G)$ and, as in the
proof of Theorem~\ref{main theorem 3}, we see that $G$ splits over
$G'$ by a maximal subgroup $M$ of $G$: $$G\cong G'\rtimes M.$$

\noindent Since $\Center(G)\subseteq G'$, we must have
$\Center(G)=1$ and, as $M\cong G/G'$ is abelian, we deduce that
$\Centralizer_M(G')=1.$ It follows that $$M\leq \Aut(G').$$ First we
assume that $G'\cong C_3\times C_3$, then $M$ is an abelian subgroup
of odd order of $\Aut(C_3\times C_3)=\GL_2(3)$. This forces $M\cong
C_3$, which implies that $|G|=|G'||M|=9\cdot 3=27$ and hence $G$ is
nilpotent, a contradiction. Next we assume that $G'\cong C_5\times
C_5$. Arguing similarly, we see that $M$ is an abelian subgroup of
odd order of $\GL_2(5)$. This forces $M\cong C_3, C_5$ or $C_{15}$.
The case $M\cong C_5$ would imply that $G$ is nilpotent whereas the
case $M\cong C_{15}$ would imply that $d(G)=23/375$, which is a
contradiction. We conclude that $M\cong C_3$ so that $G\cong
(C_5\times C_5)\rtimes C_3$.
\end{proof}


\section{A conjugacy class size theorem}\label{section4}

In this section, we prove Theorem~\ref{main theorem 1} and then give
an example showing how one can obtain some properties of certain
groups with `large' commuting probability.

\begin{proof}[Proof of Theorem~\ref{main theorem 1}] Assume, to the contrary, that all the nontrivial orbits of the conjugacy action of $G$ on
$N$ have size at least $s$. Since $G$ splits over $N$, let $G=HN$
where $H \cap N=1$. We denote $C=\bC_H(N)$ and clearly $C \lhd H$.
Every element of $G$ can be written uniquely as $ha$ where $h \in H$
and $a \in N$. We now examine the class sizes in $G$.

First, let $g=ha$ with $a \neq 1$. Sine $a$ is in an orbit of $H$ on
$N$ of size greater than or equal to $3$, we can find $s-1$ other
elements $a_2, a_3,..., a_s$ in the orbit of $a$. Therefore there
exist $t_2, t_3,...,t_s \in H$ such that $a^{t_i}=a_i$ for $2\leq
i\leq s$. Thus
$$g=ha, g^{t_2}=h^{t_2} a_2, g^{t_3}=h^{t_3} a_3,..., g^{t_s}=h^{t_s}a_s$$ are
$s$ different elements in the conjugacy class of $g$. We now have
\begin{equation}\label{equation 1}
\text{every conjugacy class of an element outside } H \text{ has
size at least } s.
\end{equation}

It remains to consider the conjugacy classes of elements of $H$. Let
$g=h_1$ for some $h_1 \in H$. If $h_1 \not \in C$, then there exits
some $a \in N$ which is not fixed by $h_1$. Thus
$$a h_1 a^{-1}=h_1 h_1^{-1} a h_1 a^{-1}=h_1 a_1, \text{ where }
a_1=h_1^{-1}ah_1a^{-1} \neq 1.$$ By the previous paragraph, we know
that there are $t_2, t_3,...,t_s \in H$ such that $$g=h_1, h_1 a_1,
h_1^{t_2} a_2, h_1^{t_3} a_3,...,h_1^{t_s}a_s$$ are $s+1$ different
elements in the conjugacy class of $g$, where $a_i$'s are nontrivial
and distinct. Thus $g$ is in a conjugacy class of size at least
$s+1$.

If $h_1 \neq h_2$, $h_1, h_2 \in H \backslash C$, and $h_1, h_2$ are
in the same conjugacy class, then we know that $$h_1, h_1 a_1,
h_1^{t_2} a_2, h_1^{t_3} a_3,...,h_1^{t_s}a_s, h_2
$$ are $s+2$ distinct elements in the same conjugacy class. This
implies in general that if $h_1, h_2, \dots, h_t$ are distinct
elements in $H \backslash C$ and $h_1, h_2, \dots, h_t$ are in the
same conjugacy class, then the size of the conjugacy class is
greater than or equals to
$$s+t.$$

Denote $k=|H|/|C|$, then there are $(k-1)|C|$ elements in $H
\backslash C$. We consider all the conjugacy classes of $G$ which
contain some elements in $H$. Suppose all the elements in $H
\backslash C$ belong to $n$ different conjugacy classes and each
conjugacy class contains $t_1, \dots, t_n$ elements in $H \backslash
C$ respectively. Then $$\sum_{i=1}^n t_i=|H\backslash C|=(k-1)|C|$$
and the sum of sizes of these $n$ classes is at least
$$\sum_{i=1}^n (s+t_i)=ns+\sum_{i=1}^n t_i.$$ Therefore, the average size of a conjugacy class of an element in $H$ is at least
\[\frac {ns+\sum_{i=1}^n t_i+|C|}{n+|C|}=\frac
{ns+k|C|}{n+|C|}.\]

Since $C$ acts trivially on $N$ and $|H/C|=k$, the conjugacy action
of $G$ on $N$ has orbit of size at most $k$. Since $N$ is nontrivial
and every nontrivial orbit of $G$ on $N$ has size at least $s$, we
deduce that $k\geq s$. It follows that $$\frac {ns+k|C|}{n+|C|}\geq
\frac {ns+s|C|}{n+|C|}= s$$ and hence
\begin{equation}\label{equation 2}
\text{the average size of a class of an element in } H \text{ is at
least } s.
\end{equation}
Combining (\ref{equation 1}) and~(\ref{equation 2}), we conclude
that the average class size of $G$ is at least s, which violates the
hypothesis that $d(G)>1/s$.
\end{proof}

The following is an application of Theorem~\ref{main theorem 1} to
the study of finite groups with commuting probability greater than
$1/3$.

\begin{corollary}\label{lemma d(G)<1/3} Let $G=(C_2\times C_2)\rtimes H$
and assume that $d(G)>1/3$. Then there exists a nontrivial element
of $C_2\times C_2$ that is fixed under the conjugation action of
$H$. In particular, $\Center(G)\neq 1$.
\end{corollary}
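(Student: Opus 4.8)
The plan is to apply Theorem~\ref{main theorem 1} directly with the specific data $N = C_2 \times C_2$ and $s = 3$, and then to push the conclusion a little further to obtain a nontrivial \emph{fixed} point rather than merely a small conjugacy class.

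First I would set $N = C_2 \times C_2$, which is abelian, normal in $G$ (being the first factor of the semidirect product), and nontrivial, and observe that $G = NH$ with $N \cap H = 1$, so $G$ splits over $N$. Since $d(G) > 1/3 = 1/s$ with $s = 3$, Theorem~\ref{main theorem 1} applies and yields a nontrivial conjugacy class of $G$ contained in $N$ of size at most $s - 1 = 2$. Now the key observation is that any conjugacy class of $G$ inside $N$ is in fact an orbit of the conjugation action of $G$ (equivalently of $H$, since $N$ acts trivially on itself by conjugation) on $N$, and the nontrivial elements of $N = C_2 \times C_2$ form a single orbit of size $3$ under $\Aut(C_2 \times C_2) \cong S_3$ whenever the action is transitive. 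More carefully: the $G$-orbits on the three nontrivial involutions of $N$ partition a set of size $3$, so the possible orbit sizes are $1+1+1$, $1+2$, or $3$. Having a nontrivial class of size at most $2$ rules out the orbit of size $3$, so at least one nontrivial element of $N$ lies in an orbit of size $1$ or $2$; but an orbit of size $2$ together with the fixed identity would force the third nontrivial element into its own orbit of size $1$ as well (since $2 + 1 = 3$). Either way there is a nontrivial element $v \in N$ with a singleton orbit, i.e. $v$ is fixed by the conjugation action of $H$ (and of $G$).

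Such a $v$ is then centralized by all of $H$ and all of $N$ (as $N$ is abelian), hence by $G = NH$, so $v \in \Center(G)$, giving $\Center(G) \neq 1$. This establishes both assertions of the corollary.

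The only mild subtlety is the combinatorial step identifying ``nontrivial conjugacy class inside $N$ of size $\le 2$'' with the existence of a \emph{fixed} nontrivial point, which uses the special arithmetic of the number $3$ (the number of nontrivial elements of $C_2\times C_2$): no partition of $\{1,2,3\}$ into orbits, one of which is the singleton $\{1\}$ for the identity, can have a part of size exactly $2$ without also having a second part of size $1$. I expect the invocation of Theorem~\ref{main theorem 1} and this bookkeeping to be entirely routine; there is no real obstacle here, which is presumably why the authors flagged it as an ``application'' illustrating the theorem rather than as a result requiring new ideas.
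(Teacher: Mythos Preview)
Your proposal is correct and follows essentially the same approach as the paper: apply Theorem~\ref{main theorem 1} with $s=3$ to obtain a nontrivial $G$-class (equivalently $H$-orbit) in $N$ of size at most $2$, and then use the partition of the three nontrivial elements of $C_2\times C_2$ to extract a singleton orbit. Your write-up is more detailed (in particular you justify why $G$-classes inside $N$ coincide with $H$-orbits and why a fixed point lies in $\Center(G)$), but the argument is the same.
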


\begin{proof} By Theorem~\ref{main theorem 1}, the group $H$ has a
nontrivial orbit of size at most $2$ on $C_2\times C_2$. If this
orbit has size 1 then we are done. Otherwise, it has size 2 and
hence the other orbit must have size 1, as wanted.
\end{proof}


\end{document}